\def\frk{\frak}               
\def\Phi{{\frk n}}
\def\Phi{{\frk N}}
\def\opn#1#2{\def#1{\operatorname{#2}}} 
\opn\chara{char} \opn\length{\ell} \opn\pd{pd} \opn\rk{rk}
\opn\projdim{proj\,dim} \opn\injdim{inj\,dim} \opn\rank{rank}
\opn\depth{depth} \opn\sdepth{sdepth} \opn\fdepth{fdepth}
\opn\grade{grade} \opn\height{height} \opn\embdim{emb\,dim}
\opn\codim{codim}  \opn\min{min} \opn\max{max}
\opn\Tr{Tr} \opn\bigrank{big\,rank}
\opn\superheight{superheight}\opn\lcm{lcm}
\opn\trdeg{tr\,deg}
\opn\reg{reg} \opn\lreg{lreg} \opn\ini{in} \opn\lpd{lpd}
\opn\size{size}
\opn\div{div} \opn\Div{Div} \opn\cl{cl} \opn\Cl{Cl}
\opn\Spec{Spec} \opn\Supp{Supp} \opn\supp{supp} \opn\Sing{Sing}
\opn\Ass{Ass} \opn\Min{Min}
\opn\Ann{Ann} \opn\Rad{Rad} \opn\Soc{Soc}
\opn\Im{Im} \opn\Ker{Ker} \opn\Coker{Coker} \opn\Am{Am}
\opn\Hom{Hom} \opn\Tor{Tor} \opn\Ext{Ext} \opn\End{End}
\opn\Aut{Aut} \opn\id{id}  \opn\deg{deg}
\opn\nat{nat}
\opn\pff{pf}
\opn\Pf{Pf} \opn\GL{GL} \opn\SL{SL} \opn\mod{mod} \opn\ord{ord}
\opn\Gin{Gin} \opn\Hilb{Hilb}
\opn\aff{aff} \opn\con{conv} \opn\relint{relint} \opn\st{st}
\opn\lk{lk} \opn\cn{cn} \opn\core{core} \opn\vol{vol}
\opn\link{link} \opn\star{star}
\opn\gr{gr}
\def\pot#1#2{#1[\kern-0.28ex[#2]\kern-0.28ex]}
\opn\dirlim{\underrightarrow{\lim}}
\opn\inivlim{\underleftarrow{\lim}}
\let\to=\rightarrow
\def\Implies{\ifmmode\Longrightarrow \else
        \unskip${}\Longrightarrow{}$\ignorespaces\fi}
\def\implies{\ifmmode\Rightarrow \else
        \unskip${}\Rightarrow{}$\ignorespaces\fi}
\def\iff{\ifmmode\Longleftrightarrow \else
        \unskip${}\Longleftrightarrow{}$\ignorespaces\fi}
\newtheorem{Theorem}{Theorem}[section]
\newtheorem{Lemma}[Theorem]{Lemma}
\newtheorem{Proposition}[Theorem]{Proposition}
\newtheorem{Example}[Theorem]{Example}
\let\epsilon\varepsilon
\let\phi=\varphi
\let\kappa=\varkappa
\def\qed{\ifhmode\textqed\fi
      \ifmmode\ifinner\quad\qedsymbol\else\dispqed\fi\fi}
\def\textqed{\unskip\nobreak\penalty50
       \hskip2em\hbox{}\nobreak\hfil\qedsymbol
       \parfillskip=0pt \finalhyphendemerits=0}
\def\dispqed{\rlap{\qquad\qedsymbol}}
\opn\dis{dis}
\def\pnt{{\raise0.5mm\hbox{\large\bf.}}}
\opn\Lex{Lex}
\begin{document}
\title{\bf Depth of some square free monomial  ideals}

\author{ Dorin Popescu and Andrei Zarojanu}

\thanks{The  support from  grant ID-PCE-2011-1023 of Romanian Ministry of Education, Research and Innovation is gratefully acknowledged.}

\address{Dorin Popescu,  "Simion Stoilow" Institute of Mathematics of Romanian Academy, Research unit 5,
 P.O.Box 1-764, Bucharest 014700, Romania}
\email{dorin.popescu@imar.ro}
\address{Andrei Zarojanu,  Faculty of Mathematics and Computer Sciences, University
of Bucharest, Str. Academiei 14, Bucharest, Romania}
\email{andrei\_zarojanu@yahoo.com}

\maketitle
\maketitle
\begin{abstract} Let $I\supsetneq J$ be  two  square free monomial ideals of a polynomial algebra over a field generated in degree $\geq 1$, resp. $\geq 2$ .  Almost always when $I$ contains precisely one variable, the other generators having degrees $\geq 2$, if  the Stanley depth of $I/J$ is $\leq 2$ then the usual depth of $I/J$ is $\leq 2$ too, that is the Stanley Conjecture holds in these cases.

 \noindent
  {\it Key words } : Monomial Ideals,  Depth, Stanley depth.\\
 {\it 2000 Mathematics Subject Classification: Primary 13C15, Secondary 13F20, 13F55,
13P10.}
\end{abstract}

\section*{Introduction}

 Let $K$ be a field, $S=K[x_1,\ldots,x_n]$ be the polynomial algebra in $n$ variables over $K$ and   $I\supsetneq J$  two  square free monomial ideals of $S$. We assume that $I$, $J$ are generated by square free monomials of degrees $\geq d$, resp. $\geq d+1$ for some $d\in {\bf N}$.  Then $\depth_S I/J\geq d$ (see  \cite[Proposition 3.1]{HVZ}, \cite[Lemma 1.1]{P}). Upper bounds of $\depth_S I/J$ are given by numerical conditions in \cite{P2}, \cite[Theorem 2.2]{P}, \cite[Theorem 1.3]{P1} and \cite[Theorem 2.4]{Sh}. An important tool in the proofs is the Koszul homology, except in the last quoted paper, where the results are stronger, but the proofs are extremely short relying completely on some results concerning the Hilbert depth, which proves there  to be a very strong tool (see \cite{BKU}, \cite{U} and \cite{IM}). These results are inspired by the so called the Stanley Conjecture, which we explain below.

   Let $P_{I\setminus J}$  be the poset of all square free monomials of $I\setminus J$ (a finite set) with the order given by the divisibility. Let ${\mathcal P}$ be a partition of  $P_{I\setminus J}$ in intervals $[u,v]=\{w\in  P_{I\setminus J}: u|w, w|v\}$, let us say   $P_{I\setminus J}=\cup_i [u_i,v_i]$, the union being disjoint.
Define $\sdepth {\mathcal P}=\min_i\deg v_i$ and  the so called {\em Stanley depth} of $I/J$ given by $\sdepth_SI/J=\max_{\mathcal P} \sdepth {\mathcal P}$, where ${\mathcal P}$ runs in the set of all partitions of $P_{I\setminus J}$ (see  \cite{HVZ}, \cite{S}).
The Stanley depth is not easy to handle, see \cite{HVZ}, \cite{R}, \cite{Is}, \cite{HPV} for some of its properties.

 Stanley's Conjecture says that  $\sdepth_S I/J\geq \depth_S I/J$. Thus the Stanley depth of $I/J$ is a natural combinatorial upper bound of
 $ \depth_S I/J$ and the above results give  numerical conditions to imply upper bounds of $ \sdepth_S I/J$. When $J=0$ the Stanley Conjecture holds either
when $n\leq 5$ by \cite{P3}, or when $I$ is an intersection of four monomial  prime ideals by \cite{AP}, \cite{P4}, or when $I$ is an intersection of three primary ideals by \cite{z}, or when $I$ is an almost complete intersection by \cite{Cim}.

 Let $r$ be the number of the square free monomials of degree $d$ of $I$ and $B$ (resp. $C$) be the set of the square free monomials of degrees $d+1$  (resp. $d+2$) of $I\setminus J$.  Set $s=|B|$, $q=|C|$. If either $s>r+q$, or $r>q$, or $s<2r$ then $\sdepth_SI/J\leq d+1$ and if the Stanley Conjecture holds then any of these numerical conditions would imply $\depth_SI/J\leq d+1$. In particular this was proved  directly in \cite{P1} and \cite{Sh}.

 Now suppose that $I$ is generated by one variable and some square free monomials of degrees $\geq 2$. It is the purpose of our paper to show that  almost always  if  $\sdepth_SI/J\leq 2$ then  $\depth_SI/J\leq 2$ (see our Theorem \ref{p1}). It is known already that  $\sdepth_SI/J\leq 1$ implies  $\depth_SI/J\leq 1$ (see \cite[Theorem 4.3]{P}) and so our Theorem
\ref{p1} could be seen as a new step (small but difficult) in the study of Stanley's Conjecture.

\section{Stanley depth of some square free monomial ideals}

Let $I\supsetneq J$ be  two  square free monomial ideals of $S$. We assume that $I$, $J$ are generated by square free monomials of degrees $\geq d$, resp. $\geq d+1$ for some $d\in {\bf N}$.
As above $B$ (resp. $C$) denotes the set of the square free monomials of degrees $d+1$  (resp. $d+2$) of $I\setminus J$.
\begin{Lemma} \label{r} Suppose that $d=1$, $I=(x_1,\ldots,x_r)$ for some  $1\leq r< n$ and   $J\subset I$  be a square free monomial ideal generated in degree $\geq 2$. Let $B$ be the set of all square free monomials of degrees $2$ from $I\setminus J$. Suppose that
 $\depth_S I/(J+((x_j)\cap B))=1$ for some $r<j\leq n$. Then  $\depth_S I/J \leq 2$.
\end{Lemma}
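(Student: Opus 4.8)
The plan is to prove this by a short exact sequence argument, reducing the depth computation for $I/J$ to that of $I/(J+((x_j)\cap B))$, whose depth is assumed to be $1$. The natural device is the short exact sequence of $S$-modules associated to adding the extra generators coming from $(x_j)\cap B$. Concretely, write $J' = J + ((x_j)\cap B)$. Since $J \subset J' \subset I$, there is a short exact sequence
\[
0 \to J'/J \to I/J \to I/J' \to 0.
\]
By hypothesis $\depth_S I/J' = 1$. So by the depth lemma applied to this sequence, if I can show $\depth_S J'/J \leq 2$, then $\depth_S I/J \leq 2$ follows — indeed, when $\depth_S I/J' = 1$, the depth lemma gives $\depth_S I/J \leq \max\{\depth_S J'/J, \depth_S I/J' + 1\}$... but I must be careful: the inequality from the depth lemma that I actually get for the middle term is $\depth_S I/J \geq \min\{\depth_S J'/J, \depth_S I/J'\}$, which is the wrong direction. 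The correct move is the other consequence: $\depth_S J'/J \geq \min\{\depth_S I/J, \depth_S I/J' + 1\}$, equivalently if $\depth_S I/J \geq 3$ and $\depth_S I/J' = 1$ then $\depth_S J'/J \geq 2$; that still isn't a contradiction by itself. So the real content must be a direct analysis of the module $J'/J$.

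**Analyzing $J'/J$.**

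The key observation is that $J'/J \cong ((x_j)\cap B)/(J \cap (x_j)\cap B)$ (as $S$-modules, up to the usual identification), and this is a module generated in degree $2$ by the square free monomials $x_j x_k \in B$ (those degree-$2$ monomials of $I\setminus J$ divisible by $x_j$; note $j > r$ so $x_j \notin I$, hence such $x_j x_k$ has $x_k$ among $x_1,\dots,x_r$). I would try to show $\depth_S J'/J \leq 2$ directly, by exhibiting it as (a shift of) a module of the form $L/L'$ for square free monomial ideals with generators in degrees $\geq 2$, resp. $\geq 3$, inside the smaller polynomial ring $S/(x_j)$ or by relating it to $I''/J''$ for an ideal $I''$ containing the single variable $x_j$ — this brings us into the scope of the already-known case \cite[Theorem 4.3]{P}, which says $\sdepth \leq 1 \Rightarrow \depth \leq 1$, after a degree shift. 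Alternatively, and perhaps more cleanly, I would pass to $\bar S = S/(x_j)$: since the generators $x_j x_k$ become, modulo $x_j$... no — they become zero. Instead the right reduction is to note $J'/J$ is annihilated by nothing forcing low depth automatically, so one genuinely needs the structure: $J'/J$ is isomorphic as a graded module to $x_j \cdot \big((x_{k} : x_j x_k \in B)/(\text{stuff from } J)\big)$, and the colon ideal description lets one identify $\depth_S J'/J = \depth_{S} (I_1/J_1)$ where $I_1 = (x_k : x_jx_k \in B, \text{together with } x_j)$ or similar, with $d$ effectively raised by one; then the known implication $\sdepth \leq 1 \Rightarrow \depth \leq 1$ (in the appropriate shifted grading) finishes it.

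**The main obstacle.**

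The hard part will be the bookkeeping in the second step: correctly identifying $J'/J$ with a module to which the earlier results apply, and in particular handling the contribution of $J$ (the square free monomials of $J$ that are multiples of $x_j$) so that the resulting pair of ideals still has the ``generated in degrees $\geq d, \geq d+1$'' shape with the right $d$. One must also verify the side hypothesis that the ambient variable set is large enough (the ``$r < n$'' type condition) survives the reduction. I expect that once $J'/J$ is written as $x_j M$ for a suitable $\bar S$-module $M$ and $M$ is recognized as $I_1/J_1$ for square free monomial ideals over $K[x_1,\dots,x_r]$ (or a localization thereof) generated in degrees $\geq 1$, resp. $\geq 2$, the conclusion $\depth \leq 2$ will drop out of \cite[Theorem 4.3]{P} applied after the shift, but making all these identifications precise — especially tracking which monomials of $B$ and $J$ survive — is where the real work lies. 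Finally I would reassemble: $\depth_S J'/J \leq 2$ together with $\depth_S I/J' = 1$ and the depth lemma applied to $0 \to J'/J \to I/J \to I/J' \to 0$ forces $\depth_S I/J \leq 2$, because if $\depth_S I/J \geq 3$ then from the sequence $\depth_S I/J' \geq \min\{\depth_S I/J,\ \depth_S J'/J - 1\}$... — here I must instead use the dual estimate $\depth_S I/J' \geq \min\{\depth_S I/J, \depth_S J'/J - 1\}$ is again the wrong way, so the clean finish is: the long exact sequence in local cohomology $H^0_{\mm}(I/J') \to H^1_{\mm}(J'/J) \to H^1_{\mm}(I/J)$ combined with $H^1_{\mm}(J'/J) \neq 0$ (from $\depth_S J'/J \leq 2$, and using $\depth_S J'/J \geq 2$ which holds by the HVZ/P bound so in fact $\depth_S J'/J = 2$) and $H^0_{\mm}(I/J') = 0$ (as $\depth_S I/J' = 1 > 0$) yields $H^1_{\mm}(I/J) \neq 0$, i.e. $\depth_S I/J \leq 2$, as desired.
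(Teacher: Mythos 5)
Your proposal does not work, and the failure is structural rather than a matter of missing bookkeeping. Write $J'=J+((x_j)\cap B)$ and consider, as you do, the sequence $0\to J'/J\to I/J\to I/J'\to 0$ with $\depth_S I/J'=1$. Suppose for contradiction $\depth_S I/J\geq 3$. The long exact sequence in local cohomology then gives $H^1_{\mm}(J'/J)=0$ and $H^2_{\mm}(J'/J)\iso H^1_{\mm}(I/J')\neq 0$, i.e.\ it forces exactly $\depth_S J'/J=2$. So the statement you set out to prove about the subobject, $\depth_S J'/J\leq 2$, is perfectly consistent with $\depth_S I/J\geq 3$ and can never produce the contradiction you need; you would instead have to rule out $\depth_S J'/J=2$, which is not what your reduction aims at (and note that $\depth_S J'/J\leq 2$ is in any case false in general: for $J=0$ and $(x_j)\cap B$ a single monomial, $J'/J$ is free of depth $n$). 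Your closing local cohomology step also contains an outright error: from $\depth_S J'/J=2$ you infer $H^1_{\mm}(J'/J)\neq 0$, but depth $2$ means precisely that $H^0_{\mm}$ and $H^1_{\mm}$ vanish; and even if $H^1_{\mm}(I/J)\neq 0$ held, that would give $\depth_S I/J\leq 1$, not $\leq 2$. You half-noticed the directional problem with the Depth Lemma in your first paragraph but the patch does not repair it.

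The paper's proof goes a completely different way, via Koszul homology. Since $I/J'$ is a square free module, $\depth_S I/J'=1$ yields a nonzero class in $H_{n-1}(x;I/J')$ represented in square free multidegree by $z=\sum_{i=1}^r y_ix_ie_{[n]\setminus\{i\}}$ with $y_i\in K$. Because $j>r$, the index $j$ is absent from every $\{i\}$ occurring, so $e_j$ appears in every wedge factor and one may delete it to form $z'=\sum_{i=1}^r y_ix_ie_{[n]\setminus\{i,j\}}\in K_{n-2}(x;I/J)$; the coefficients arising in $\partial z'$ are not multiples of $x_j$, so vanishing modulo $J'$ implies vanishing modulo $J$, and $z'$ gives a nonzero class in $H_{n-2}(x;I/J)$, whence $\depth_S I/J\leq 2$. (The Example following the Lemma, with $j\leq r$, shows why this lifting is where the hypothesis $j>r$ is genuinely used \textemdash\ a constraint your reduction never engages with.) If you want to salvage a homological-algebra route you must extract more from the hypothesis than the single number $\depth_S I/J'=1$; the paper does so by using the actual square free cycle realizing that depth.
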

\begin{proof} Since $I/(J+((x_j)\cap B))$ has a square free, multigraded free resolution we see that only the  components of square free degrees of
$$\Tor_{n-1}^S(K,I/(J+(x_j)\cap B)))\cong H_{n-1}(x;I/(J+(x_j)\cap B))$$ are nonzero. Thus  we may find
$z=\sum_{i=1}^r y_ix_ie_{[n]\setminus \{\i\}}\in K_{n-1}(x;I/(J+(x_j)\cap B)$, $y_i\in K$ inducing a nonzero element in $H_{n-1}(x;I/(J+(x_j)\cap B)$. Here we denoted $e_{\tau}=\wedge_{j\in \tau}\ e_j$ for a subset $\tau\subset [n]$. Then we see that
$$z'=\sum_{i=1}^r y_ix_ie_{[n]\setminus \{i,j\}}\in K_{n-2}(x;I/J)$$ induces a nonzero element in $H_{n-2}(x;I/J)$. Thus $\depth_SI/J\leq 2$ (see \cite[Theorem 1.6.17]{BH}).
\end{proof}
\begin{Example} {\em Let $n=4$, $r=2$, $d=1$, $I=(x_1,x_2)$, $J=(x_1x_2)$, $B=\{x_1x_3,x_1x_4, x_2x_3,x_2x_4\}$. Then
$F=I/(J + (x_1) \cap B ) \cong (x_1,x_2) / ((x_1) \cap (x_2,x_3,x_4))$ has sdepth and depth $=1$, but $\depth_S I/J=3$. Thus the statement of the above lemma can be false if $j<r$. More precisely, $\depth_S F=1$ because $z=x_1e_{234}$ induces a nonzero element in $H_{3}(x;F)$ but $e_1$ is not present in $e_{234}$.}
\end{Example}
\begin{Proposition} \label{p} Suppose that $I \subset S$ is generated by $\{x_1,\ldots,x_r\}$ for some  $1\leq r\leq n$ and some square free monomials of degrees $\geq 2$,  and $x_ix_tx_k\in J$ for all $i\in [r]$ and $r<t<k\leq n$. Then $\depth_S I/J\leq 2$.
\end{Proposition}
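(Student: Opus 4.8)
The plan is to split off the linear part of $I$ and then appeal to Lemma \ref{r}.

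Let $N\subseteq I/J$ be the image of $(x_1,\ldots,x_r)$. Then $N\cong (x_1,\ldots,x_r)/J_1$, where $J_1=(x_1,\ldots,x_r)\cap J$ is a square free monomial ideal generated in degree $\geq 2$ and properly contained in $(x_1,\ldots,x_r)$, while $(I/J)/N\cong I/((x_1,\ldots,x_r)+J)$. Writing the non-linear generators of $I$ as square free monomials $f_1,\ldots,f_m$ of degree $\geq 2$, the quotient $(I/J)/N$ is generated by the images of those $f_j$ lying outside $(x_1,\ldots,x_r)+J$, and checking minimal generators shows it has the form $\tilde I/\tilde J$ with $\tilde I$ generated in degree $\geq 2$ and $\tilde J$ in degree $\geq 3$; hence $\depth_S(I/J)/N\geq 2$ by \cite[Proposition 3.1]{HVZ} (unless it is zero). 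Feeding this into the depth lemma for $0\to N\to I/J\to (I/J)/N\to 0$ reduces the statement to $\depth_S N\leq 2$.

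Now the hypothesis says exactly that $J_1\supseteq (x_1,\ldots,x_r)L_0$, where $L_0=(x_tx_k : r<t<k\leq n)$ is the edge ideal of the complete graph on the free variables; equivalently $L_0$ annihilates $N$, so $N$ is a module over the Cohen--Macaulay ring $R=S/L_0$ of dimension $r+1$. I would prove $\depth_S (x_1,\ldots,x_r)/J_1\leq 2$ by induction on $n-r$. For $n=r$ this module is $\mm/J_1$ in $K[x_1,\ldots,x_r]$, which has depth $\leq 1$ (compare with $K=S/\mm$): the base case. For $n-r\geq 1$, fix $j$ with $r<j\leq n$. In the notation of Lemma \ref{r}, $J_1+((x_j)\cap B)=J_1+(x_1x_j,\ldots,x_rx_j)$, and killing $x_j$ identifies $(x_1,\ldots,x_r)/(J_1+(x_1x_j,\ldots,x_rx_j))$ with a module of the same type over $S/(x_j)$ but with one fewer free variable (its $L_0$ becomes the complete graph on $\{x_{r+1},\ldots,x_n\}\setminus\{x_j\}$), so the inductive hypothesis applies to it. If for some $j$ this reduced module has depth $1$, Lemma \ref{r} (with $I=(x_1,\ldots,x_r)$ and $J=J_1$) gives $\depth_S(x_1,\ldots,x_r)/J_1\leq 2$. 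For $n-r=1$ the reduced module lives over $K[x_1,\ldots,x_r]$; being generated in degree $1$ with relations in degree $\geq 2$ it has depth $\geq 1$ (cf. \cite[Lemma 1.1]{P}), and $\leq 1$ by the base case, hence exactly $1$, so this case closes.

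The main obstacle is the remaining case $n-r\geq 2$ with the reduced module $(x_1,\ldots,x_r)/(J_1+(x_1x_j,\ldots,x_rx_j))$ of depth $2$ for every $j$ -- where Lemma \ref{r} gives no information (this already happens for $J_1=(x_1,\ldots,x_r)L_0$). Here one must prove $\depth_S N=2$ directly. One route: use the $R$-module structure, noting that $\mathfrak a:=(x_1,\ldots,x_r)R$ has depth $2$ -- from $0\to\mathfrak a\to R\to R/\mathfrak a\to 0$, since $\depth R=r+1$ while $R/\mathfrak a$ is the complete-graph Stanley--Reisner ring, of depth $1$ -- and that $N=\mathfrak a/\mathfrak b$ is a quotient of $\mathfrak a$ by an ideal $\mathfrak b$ tightly constrained by the hypothesis; the point is to rule out $\depth_S N\geq 3$, which the depth lemma alone does not do. An alternative is to imitate the proof of Lemma \ref{r} and exhibit an explicit nonzero cycle in $H_{n-2}(x;N)$ assembled from the classes $\bar x_i$ and $\overline{x_ix_j}$. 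Carrying out this last step is where the real work lies; the rest is bookkeeping.
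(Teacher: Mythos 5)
Your opening reduction is sound and matches the paper: the sequence $0\to U/(U\cap J)\to I/J\to I/(U+J)\to 0$ with $U=(x_1,\ldots,x_r)$, the observation that the cokernel has depth $\geq 2$, and the consequent reduction to $\depth_S U/(U\cap J)\leq 2$ are exactly how the paper disposes of the non-linear generators (it does this at the end rather than the start, which is immaterial). The branch where some $j>r$ gives $\depth_S U/(J_1+(x_j)\cap B)=1$ is also correctly handled via Lemma \ref{r}. But the complementary branch --- $\depth_S U/(J_1+(x_j)\cap B)\geq 2$ for every $j>r$ --- is precisely where the content of the proposition sits, and you explicitly leave it open (``carrying out this last step is where the real work lies''). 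Neither of your two suggested routes closes it: knowing that $\mathfrak a=(x_1,\ldots,x_r)R$ has depth $2$ over $R=S/L_0$ says nothing about a quotient $\mathfrak a/\mathfrak b$, since passing to a quotient can raise depth and the Depth Lemma never bounds depth from above on its own; and ``exhibit an explicit nonzero cycle in $H_{n-2}(x;N)$'' is a restatement of the goal, not an argument. So the proof is genuinely incomplete.

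For comparison, the paper closes this branch by assuming $\depth_S I/J>2$ and running a \emph{decreasing} induction on $t$, $r<t\leq n$, to show $\depth_S I/(J+(x_t,\ldots,x_n)\cap B)\geq 2$, using the Mayer--Vietoris-type sequence
$$0\to I/L\to I/(J+(x_t)\cap B)\oplus I/(J+(x_{t+1},\ldots,x_n)\cap B)\to I/(J+(x_{t},\ldots,x_n)\cap B)\to 0$$
with $L=(J+(x_t)\cap B)\cap(J+(x_{t+1},\ldots,x_n)\cap B)$. The hypothesis $x_ix_tx_k\in J$ for $r<t<k\leq n$ is used exactly here: it guarantees no $c\in C$ is a multiple of $x_tx_k$ with $r<t<k$, hence $L=J$, so if the right-hand term ever had depth $1$ the Depth Lemma would force $\depth_S I/J=2$, a contradiction. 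The induction terminates at $t=r+1$ with $\depth_S I/(J+(x_{r+1},\ldots,x_n)\cap B)\geq 2$, which is impossible by \cite[Lemma 1.8]{P}. In your writeup the hypothesis is only used to say that $L_0$ annihilates $N$, which is strictly weaker information; the identity $L=J$ is the step your proposal is missing.
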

\begin{proof} First suppose that $I=(x_1,...,x_r)$.
If there exists $j>r$ such that $\depth_S I/(J+(x_j)\cap B)=1$ then we may apply the above lemma.
Thus we may suppose that $\depth_S I/(J+(x_j)\cap B)\geq 2$ for all $j>r$. Assume that $\depth_SI/J>2$. By decreasing induction on $r<t\leq n$ we show that  $\depth_S I/(J+(x_t,\ldots,x_n))\cap B)\geq 2$. We assume that $t<n$ and $\depth_S I/(J+(x_{t+1},\ldots,x_n))\cap B)\geq 2$, $\depth_S I/(J+(x_{t},\ldots,x_n))\cap B)=1$. Set $L=(J+(x_t)\cap B)\cap (J+(x_{t+1},\ldots,x_n)\cap B)$.
In the following exact sequence
$$0\to I/L\to I/(J+(x_t)\cap B)\oplus I/(J+(x_{t+1},\ldots,x_n)\cap B)\to I/(J+(x_{t},\ldots,x_n)\cap B)\to 0$$
the last term has the depth $1$ and the middle the depth $\geq 2$. By the Depth Lemma we get $\depth_S I/L=2$.

Remains to show that $\depth_SI/J=\depth_SI/L$. Note that there exist no $c\in C$ multiple of $x_tx_j$ for some $r<t<j\leq n$ by our hypothesis. Thus  $L=J$. Then it follows $\depth_SI/J=2$ which contradicts our assumption.
The induction ends for $t=r+1$ and we get  $\depth_S I/(J+(x_{r+1},\ldots,x_n)\cap B)=2$; but this is not possible (see for example \cite[Lemma 1.8]{P}).

Now suppose that $I=U+V$, where $U=(x_1,...,x_r)$ and $V$ is generated by some square free monomials of degrees $\geq 2$. In the following exact sequence
$$0\to U/(U\cap J) \to I/J\to I/(U+J)\to 0$$
the first term has depth $\leq 2$ from above and the last term is isomorphic with $V/(V\cap (U+J))$ and has depth $\geq 2$ by \cite[Lemma 1.1]{P}. So by the Depth Lemma it follows that $\depth I/J\leq 2$.
\end{proof}

\begin{Example}{\em Let $n=4$, $I=(x_1,x_2,x_3)$, $J=(x_1x_3)$. Clearly, $B_1=\emptyset$,\\
 $B=\{x_1x_2,x_1x_4,x_2x_3,x_2x_4,x_3x_4\}$  and $C=\{x_1x_2x_4,x_2x_3x_4\}$. We have $s=5$, $r=3$, $q=2$ and so $s=r+q$. Note that each $c\in C$ is a multiple of a monomial of the form $x_ix_j$ for some $1\leq i<j\leq 3$ and so $\depth_S I/J\leq 2$ by the above proposition. On the other hand, it is easy to see that
 $z=x_1e_2\wedge e_3-x_2e_1\wedge e_3+x_3e_1\wedge e_2$ induces a nonzero element in $H_2(x;I/J)$ and so again $\depth_S I/J\leq 2$.}
\end{Example}

\begin{Lemma}\label{za}
If a monomial $u$ of degree $k$ from $I \setminus J$ has all multiples
 of degrees $k+1$ in $J$ then $\depth I/J \leq k$.
\end{Lemma}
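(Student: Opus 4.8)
The plan is to mimic the Koszul homology argument from Lemma \ref{r}, but now starting from the trivial observation that $u$ itself generates a nonzero cycle. Concretely, write $u$ in terms of the variables $x_{j_1},\dots,x_{j_k}$ dividing it, so that $\deg u = k$. The element $\bar u$ is a nonzero element of $(I/J)$ in multidegree equal to the support of $u$; since every multiple of $u$ of degree $k+1$ lies in $J$, we have $x_i\bar u=0$ in $I/J$ for every $i\notin\supp(u)$ (if $i\in\supp(u)$ then $x_iu$ is not square free and also represents $0$ in the square free quotient, but this case is irrelevant). Hence the chain $z=\bar u\, e_{[n]\setminus\supp(u)}\in K_{n-k}(x;I/J)$ is a cycle in the Koszul complex: its differential is $\sum_{i\notin\supp(u)}\pm x_i\bar u\, e_{[n]\setminus(\supp(u)\cup\{i\})}=0$.

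Next I would argue that $z$ is not a boundary. Because $I/J$ has a square free multigraded free resolution, the Koszul homology $H_\bullet(x;I/J)\cong\Tor_\bullet^S(K,I/J)$ is concentrated in square free multidegrees; in multidegree $[n]\setminus\supp(u)$ — equivalently the square free degree whose support is exactly $\{j_1,\dots,j_k\}$ — the module $K_{n-k+1}(x;I/J)$ is zero, since any basis element $e_\tau$ with $|\tau|=n-k+1$ has $\bar m\, e_\tau$ in multidegree $\supp(m)\cup\tau$, and this can be square free with support the complement of $\supp(u)$ only if it vanishes for degree reasons. More carefully: the only $(n-k)$-chains in multidegree $[n]\setminus\supp(u)$ are scalar multiples of $\bar u\,e_{[n]\setminus\supp(u)}$, and there are no $(n-k+1)$-chains in that multidegree at all, so the boundary map into this graded piece is zero. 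Therefore $H_{n-k}(x;I/J)_{[n]\setminus\supp(u)}\neq 0$, and by \cite[Theorem 1.6.17]{BH} we conclude $\depth_S I/J\le n-(n-k)=k$.

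The main obstacle is the bookkeeping that shows $z$ represents a genuinely nonzero homology class rather than just a cycle; the subtlety (already visible in the Example following Lemma \ref{r}, where a naive choice of Koszul cycle failed because a needed basis vector $e_1$ was absent from the relevant wedge) is that one must pin down the multidegree precisely and check both that the chain group one degree up vanishes in that multidegree and that $\bar u$ itself is nonzero in $I/J$ — the latter is exactly the hypothesis $u\in I\setminus J$. Once the multidegree argument is set up, the rest is the same two-line computation as in Lemma \ref{r}; in fact Lemma \ref{r} becomes the case $k=2$ combined with passing from $I/(J+(x_j)\cap B)$ down to $I/J$, so this lemma is really the clean general statement underlying it.
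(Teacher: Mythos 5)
Your candidate cycle is natural, but the argument that it is not a boundary has a genuine error, and in fact $z$ can bound. The multidegree of $z=\bar u\, e_{[n]\setminus \supp(u)}$ is not $[n]\setminus\supp(u)$ but the full square free degree $[n]$: the coefficient $\bar u$ contributes $\supp(u)$ and the exterior factor contributes the complement. In multidegree $[n]$ the chain group $K_{n-k+1}(x;I/J)$ is spanned by the elements $\bar w\, e_{[n]\setminus \supp(w)}$ with $w$ a square free monomial of degree $k-1$ in $I\setminus J$, and this is typically nonzero once $k\geq 2$, so your claim that ``there are no $(n-k+1)$-chains in that multidegree at all'' fails. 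Concretely, take $n=3$, $I=(x_1,x_2)$, $J=(x_2x_3)$, $u=x_1x_2$ (so $k=2$, and the only square free multiple $x_1x_2x_3$ of degree $3$ lies in $J$). Then $z=\overline{x_1x_2}\,e_3$ is a cycle, but
$d(\bar x_2\, e_1\wedge e_3)=\overline{x_1x_2}\,e_3-\overline{x_2x_3}\,e_1=\overline{x_1x_2}\,e_3=z$
because $x_2x_3\in J$. So $z$ is a boundary and witnesses nothing; the conclusion $\depth_S I/J\leq 2$ is still true here (indeed $H_1(x;I/J)\neq 0$), but it is carried by a different homology class. A Koszul proof would therefore need a genuinely new non-vanishing argument, not just the observation that $z$ is a cycle. (A minor additional slip: for $i\in\supp(u)$ the element $x_i\bar u$ is generally \emph{not} zero in $I/J$ --- this module is not a ``square free quotient'' --- though this does not affect the cycle computation since those indices do not occur in $e_{[n]\setminus\supp(u)}$.)

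The paper's proof avoids all of this and is essentially one line: after renumbering so that $u=x_1\cdots x_k$, the hypothesis says exactly that $(x_{k+1},\ldots,x_n)\bar u=0$ in $I/J$, while $m\bar u\neq 0$ for every monomial $m$ in $x_1,\ldots,x_k$, because a square free generator of $J$ dividing $mu$ would already divide $u$, contradicting $u\notin J$. Hence $\Ann_S\bar u=(x_{k+1},\ldots,x_n)$ is prime, so it is an associated prime of $I/J$, and $\depth_S I/J\leq \dim S/(x_{k+1},\ldots,x_n)=k$. If you want to keep the Koszul-homology viewpoint, the clean way is to route through this associated prime (e.g.\ via $\depth M\leq\dim S/P$ for $P\in\Ass M$), rather than trying to certify a specific cycle as non-bounding.
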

\begin{proof}
Renumbering the variables $x$ we may suppose that $u=x_1\cdots x_k$. Then we see that $u(x_{k+1},...,x_n) = 0$ so $\Ann_S u = (x_{k+1},...,x_n) \in \Ass_S I/J.$ Thus $\depth I/J \leq k$.
\end{proof}
\begin{Lemma} \label{e} Suppose that $J\subset I$  are square free monomial ideals generated in degree $\geq d+1$, respectively $\geq d$ and let $V$ be an ideal    generated by $e$  square free monomials of degrees $\geq d+2$, which are not in $I$. Then $\sdepth_S(I+V)/J\leq d+1$ (resp. $\depth_S(I+V)/J\leq d+1$) implies that $\sdepth_SI/J\leq d+1$ (resp.  $\depth_SI/J\leq d+1$).  For the depth the converse is also true.
\end{Lemma}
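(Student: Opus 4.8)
The statement relates $(I+V)/J$ to $I/J$ when $V$ is generated by $e$ square free monomials of degree $\geq d+2$ not lying in $I$. The natural strategy is induction on $e$, reducing to the case of adding a single extra generator, and then exploiting the short exact sequence that compares the two quotients. Concretely, write $V = V' + (w)$ where $w$ is one of the generators of $V$ of degree $\geq d+2$, $w\notin I$; then one has the exact sequence
$$0\to (I+V')/J \to (I+V)/J \to (I+V)/(I+V'+J) \to 0,$$
where the right-hand term is $(w)/((w)\cap(I+V'+J))$, a cyclic module generated in degree $\geq d+2$, hence of depth $\geq d+2$ (for instance by \cite[Lemma 1.1]{P}, or directly since it is a square free monomial quotient with generator of degree $\geq d+2$). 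I would then apply the Depth Lemma to this sequence: if $\depth_S(I+V)/J\leq d+1$, then since the last term has depth $\geq d+2 > d+1$, the Depth Lemma forces $\depth_S(I+V')/J\leq d+1$, and by induction $\depth_S I/J\leq d+1$.

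For the Stanley depth, the argument is parallel but combinatorial. Partitions of the poset $P_{(I+V)\setminus J}$ restrict and glue along the decomposition $P_{(I+V)\setminus J} = P_{(I+V')\setminus J} \sqcup P_{(I+V)\setminus(I+V')+J}$ arising from the same short exact sequence at the level of posets; the second piece consists of multiples of $w$ not already accounted for, all of degree $\geq d+2$. Given a Stanley decomposition of $(I+V)/J$ of Stanley depth $\geq d+2$, one would need to produce one of $(I+V')/J$ with Stanley depth $\geq d+2$ — but here the inequality goes the "easy" direction since we are assuming $\sdepth \leq d+1$ and want to conclude $\sdepth \leq d+1$ for the smaller module, so contrapositively: from $\sdepth_S I/J\geq d+2$ one builds $\sdepth_S(I+V)/J\geq d+2$ by taking a good partition of $P_{I\setminus J}$ and assigning each new monomial (a multiple of some generator of $V$) to a singleton interval or extending an existing interval; since all new monomials have degree $\geq d+2$, the resulting partition still has $\sdepth\geq d+2$. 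This handles "$\sdepth_S(I+V)/J\leq d+1 \Rightarrow \sdepth_S I/J\leq d+1$" and simultaneously gives the converse for depth via the Depth Lemma run in the other direction (the middle term depth is $\geq \min$ of the two outer depths).

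The converse for depth: assuming $\depth_S I/J\leq d+1$, apply the Depth Lemma to the same sequence $0\to(I+V')/J\to(I+V)/J\to (w)/(\ldots)\to 0$ in the form $\depth(I+V)/J \geq \min\{\depth(I+V')/J,\ \depth((w)/\ldots)\}$; but this only gives a lower bound, so instead I would use the complementary short exact sequence or the fact that $\Ass_S(I/J)\subset \Ass_S((I+V)/J)$ — more precisely, an associated prime of $I/J$ of coheight $\leq d+1$ remains associated to $(I+V)/J$ because adjoining $V$ (monomials not in $I$) does not kill the relevant socle elements living in $I/J\subset (I+V)/J$. One checks that the inclusion $I/J\hookrightarrow (I+V)/J$ identifies $I/J$ with a submodule whose quotient has depth $\geq d+2$, so by the Depth Lemma $\depth(I+V)/J\leq\max\{\depth I/J, \depth(\text{quotient})-? \}$ — cleaner is: a short exact sequence $0\to A\to B\to C\to 0$ with $\depth C > \depth A$ forces $\depth B = \depth A$; since $\depth C\geq d+2$, if $\depth A=\depth I/J\leq d+1$ then inductively $\depth(I+V')/J\leq d+1$ with the quotient still of depth $\geq d+2$, giving $\depth(I+V)/J=\depth(I+V')/J\leq d+1$.

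\medskip
\noindent\textbf{Main obstacle.} The delicate point is the Stanley-depth direction: restricting/merging interval partitions of posets is not as automatic as applying the Depth Lemma, because an interval $[u,v]$ in $P_{(I+V)\setminus J}$ may have $u\in I\setminus J$ but $v$ a proper multiple of a generator of $V$, so it does not split cleanly. I would need to argue that one may always choose the optimal partition of $(I+V)/J$ so that each interval either lies entirely in $P_{I\setminus J}$ or entirely in the complement — or, failing that, to cut such a "crossing" interval into two pieces without decreasing $\sdepth$, using that the lower endpoint of the complementary piece still has degree $\geq d+2$. Verifying this splitting step carefully, and checking that the degrees of the resulting top elements stay $\geq d+2$, is where the real work lies; the depth statements themselves are routine Depth Lemma applications once the single-generator reduction is set up.
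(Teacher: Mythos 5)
Your proof is correct and follows essentially the same route as the paper: induction on $e$, the short exact sequence $0\to I/J\to (I+V)/J\to (v)/((v)\cap(I+J))\to 0$ whose last term has depth and sdepth $\geq d+2$, and the Depth Lemma for both depth implications (the paper disposes of the sdepth implication by citing Rauf's Lemma 2.2, which is exactly your contrapositive glue-two-partitions argument). The ``main obstacle'' you flag is not one: the direction you actually need only assembles a partition of the larger poset from a partition of $P_{I\setminus J}$ together with degree-$\geq d+2$ singletons for the new monomials, and never requires restricting a partition of $(I+V)/J$ down to $I/J$, so no interval-splitting arises.
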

\begin{proof}
By induction on $e$, we may consider only the case $e=1$, that is  $V=\{v\}$.
In the following  exact sequence
$$0\to I/J\to (I+V)/J\to (I+V)/(I+J)\to 0$$
the last term is isomorphic with $(v)/((v) \cap (I+J))$ and has depth and sdepth $\geq d+2$. Then the first term has sdepth $\leq d+1$ by \cite[Lemma 2.2]{R}  and depth $\leq d+1$ by the Depth Lemma.
\end{proof}

\begin{Lemma} \label{p+2} Suppose that $I \subset S$ is generated by $x_1,\ldots,x_r$ and a nonempty set $E$ of square free monomials of degrees $2$ in the variables $x_{r+1},\ldots,x_n$, and $\sdepth_S I/J=2$.
Let $x_1x_t\in B$ for some $t$, $r<t\leq n$, $I'=(x_2,\ldots,x_r)+(B\setminus \{x_1x_t\})$, $J'=J\cap I'$ and $\mathcal P$ a partition of $I'/J'$ with sdepth $3$.
Assume that
any square free monomial $u\in S$ of degree $2$, which is not in $I$, satisfies $x_1u\in J$.
Then
\begin{enumerate}
\item For any $a\in (B\setminus (x_2,\ldots,x_r,x_1x_t))\cap (x_t)$ with $x_1a\not\in J$  the interval $[a,x_1a]$ is in $\mathcal P$.
\item If $c=x_tx_ix_j\not\in J$, $r<i<j\leq n$, $i,j\not =t$  and $x_1x_tx_i, x_1x_tx_j\not\in J$ then $b=c/x_t\in B$ and if moreover $x_1b\not\in J$ then $c$ is not present in an interval $[a,c]$, $a\in B$ of  $\mathcal P$.
 \end{enumerate}
    \end{Lemma}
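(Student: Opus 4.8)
The plan is to reduce both statements to the structural fact
\[
P_{I\setminus J}=P_{I'\setminus J'}\cup\{x_1,\ x_1x_t\},\qquad\text{the union being disjoint.}
\]
Indeed $I'\subseteq I$ and $J'=J\cap I'$, so $P_{I'\setminus J'}\subseteq P_{I\setminus J}$; and if a square free monomial $w\in I\setminus J$ is not in $I'$ then $w$ is divisible by some generator of $I$ but by none of $x_2,\dots,x_r$ and by no element of $E$ (as $E\subseteq B\setminus\{x_1x_t\}$), so $x_1\mid w$; writing $w=x_1m$ with $x_1\nmid m$, every variable $x_p\mid m$ has $p>r$, and for $p\ne t$ one has $x_1x_p\in B\setminus\{x_1x_t\}$ unless $x_1x_p\in J$, the first alternative giving $w\in I'$ and the second $w\in J$ — both excluded; hence $p=t$ and $w\in\{x_1,x_1x_t\}$. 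Consequently any partition $\mathcal Q$ of $P_{I\setminus J}$ with $\sdepth\mathcal Q\ge 3$ must cover $x_1$ and $x_1x_t$ inside one interval $[x_1,w]$ with $x_1x_t\mid w$ and $\deg w\ge 3$, and since $\sdepth_SI/J=2$ no such $\mathcal Q$ exists. So in each part I argue by contradiction: if the asserted structure of $\mathcal P$ fails, I modify $\mathcal P$ into such a $\mathcal Q$.

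The modifications use the routine ``split off the first variable'' operation: if $v=uy_1\cdots y_s$ then $[u,v]=\{u\}\cup\bigcup_{p=1}^{s}[uy_p,\,uy_p\cdots y_s]$ (disjoint), all displayed tops having degree $\ge\deg u+1$. Hence after deleting from an interval of $\mathcal P$ its bottom (possibly with one or two further prescribed monomials), the remaining monomials re-partition into intervals all of whose tops have degree $\ge 3$; and if only an \emph{interior} monomial of degree $\deg u+1$ is removed and $\deg v\ge\deg u+2$, one may still keep $u$ in an interval with top of degree $\ge\deg u+1$. With this one ``liberates'' a pair $\{x_1x_l,\,x_1x_tx_l\}$ from $\mathcal P$ and inserts $[x_1,x_1x_tx_l]=\{x_1,x_1x_t,x_1x_l,x_1x_tx_l\}$, obtaining a $\mathcal Q$ with $\sdepth\mathcal Q\ge 3$. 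For part (1), since $\deg a=2$ the hypotheses force $a=x_tx_k$ with $x_tx_k\in E$, $r<k\le n$, $k\ne t$, and $x_1a\notin J$ forces $x_1x_k\in B\setminus\{x_1x_t\}$. In $\mathcal P$ the monomial $x_1a=x_1x_tx_k$ lies in an interval whose bottom is one of $a,\ x_1x_k,\ x_1a$; if that interval is $[a,x_1a]=\{a,x_1a\}$ we are done, and otherwise (bottom $a$ with top of degree $\ge 4$; or bottom $x_1x_k$; or bottom $x_1a$) the pair $\{x_1x_k,x_1a\}$ is liberated as above, $a$ being retained in a top-degree-$\ge 3$ interval whenever its interval is touched, and inserting $[x_1,x_1a]$ gives a $\mathcal Q$ with $\sdepth\mathcal Q\ge 3$, a contradiction.

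For part (2): from $x_1x_tx_i,x_1x_tx_j\notin J$ and the hypothesis (a degree-$2$ monomial $u\notin I$ has $x_1u\in J$), applied to $u=x_tx_i,x_tx_j$, one gets $x_tx_i,x_tx_j\in E$, and applied to $u=x_ix_j$ (legitimate in the case $x_1b\notin J$) one gets $x_ix_j\in I$, hence $b=c/x_t=x_ix_j\in E\subseteq B$. By part (1), $[x_tx_i,x_1x_tx_i]$ and $[x_tx_j,x_1x_tx_j]$ lie in $\mathcal P$. Suppose $c$ were present in an interval $[a,c]$ of $\mathcal P$ with $a\in B$; the degree-$2$ divisors of $c=x_tx_ix_j$ are $x_tx_i,x_tx_j,x_ix_j$, all in $B$, but $c$ divides neither $x_1x_tx_i$ nor $x_1x_tx_j$, so $c$ is not in the two part-(1) intervals and therefore $a=x_ix_j=b$, i.e. $\{b,c\}\in\mathcal P$. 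I would then reroute: depending on where $x_1x_ix_j$ (which is $\notin J$, since $x_1b\notin J$) sits in $\mathcal P$, choose $l\in\{i,j\}$, delete from $\mathcal P$ the intervals $\{b,c\}$, $[x_tx_l,x_1x_tx_l]$, the interval with bottom $x_1x_l$, and the interval containing $x_1x_ix_j$, and re-cover their union together with $\{x_1,x_1x_t\}$ by $[x_1,x_1x_tx_l]$, $[x_tx_l,c]$, $[b,x_1x_ix_j]$ and the splitting re-partitions of the leftovers — all with tops of degree $\ge 3$. This yields a $\mathcal Q$ with $\sdepth\mathcal Q\ge 3$, contradicting $\sdepth_SI/J=2$; hence no such $[a,c]$ exists.

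The step I expect to be the main obstacle is precisely this last piece of bookkeeping in part (2): one must verify that for a \emph{suitable} choice of $l\in\{i,j\}$ all the deletions and splitting re-partitions assemble into a genuine (disjoint and exhaustive) partition of $P_{I\setminus J}$ with every top of degree $\ge 3$. The delicate situation is when an interval of $\mathcal P$ has the form $\{x_1x_l,x_1x_tx_l\}$ or $\{x_1x_l,x_1x_ix_j\}$, so that a naive liberation would strand its degree-$2$ bottom, forcing the reroute through $c$ and through $x_1x_ix_j$; checking that at least one of $l=i$, $l=j$ always avoids this is the heart of the argument. The corresponding verification in part (1) is shorter but of the same nature.
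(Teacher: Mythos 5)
Your overall strategy is the same as the paper's: both arguments proceed by surgery on $\mathcal P$, producing either a sdepth-$3$ partition of $I/J$ (contradicting $\sdepth_S I/J=2$) or a sdepth-$3$ partition of $I'/J'$ that violates part (1). Your explicit identification $P_{I\setminus J}=P_{I'\setminus J'}\cup\{x_1,x_1x_t\}$ and the ``split off the first variable'' re-partitioning are sound, and they actually make precise some bookkeeping that the paper leaves implicit; your treatment of part (1) and of the second assertion of part (2) is essentially a careful version of the paper's replacements.

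There is, however, a genuine gap in the first assertion of part (2). The lemma claims $b=c/x_t\in B$ \emph{unconditionally}, i.e.\ without assuming $x_1b\notin J$; you yourself flag that your appeal to the hypothesis with $u=x_ix_j$ is ``legitimate in the case $x_1b\notin J$'', and indeed when $x_1b\in J$ that hypothesis yields no information about $b$. This is not a removable technicality: in the proof of Proposition \ref{p0} the lemma is invoked precisely to conclude that the third degree-$2$ divisor $u=c/x_{t'}$ of $c$ lies in $E$ \emph{and} satisfies $x_1u\in J$ (so that $u\in E''$), i.e.\ exactly in the regime your argument does not cover. The paper closes this case by a different route: if $b\notin B$ then $b\notin I'$, so the only possible degree-$2$ bottoms for the interval of $\mathcal P$ containing $c$ are $x_tx_i$ and $x_tx_j$; these are excluded by part (1), since their intervals are $[x_tx_i,x_1x_tx_i]$ and $[x_tx_j,x_1x_tx_j]$, neither of which contains $c$. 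Hence $c$ sits in an interval with bottom $c$ itself, and one may swap $[x_tx_i,x_1x_tx_i]$ for $[x_tx_i,c]$ (re-covering leftovers by your splitting trick) to obtain a sdepth-$3$ partition of $I'/J'$ in which $[x_tx_i,x_1x_tx_i]$ is absent, contradicting part (1). You need to add this (or an equivalent) argument for the case $x_1b\in J$.
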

\begin{proof} Let   $a=x_tx_{\nu}$ be a monomial   of $ B \setminus (x_2,\ldots,x_r,x_1x_t))$ with satisfies $x_1a\not\in J$. Suppose that  the interval $[a,x_1a]$ is  not in $\mathcal P$. Then there exists in $\mathcal P$ an interval $[a,c]$ with $c\in C$.    Thus $x_1x_{\nu}$ is in $B$ and so in  $\mathcal P$ there exists  an interval $ [x_1x_{\nu},c']$, $c'\in C $,. We replace   the   interval  $[x_1x_{\nu},c']$ by $ [x_1,x_1a] $ to get a  partition of $ I/J$ with sdepth $\geq 3$. However, such  partition of $ I/J$
  is not possible because $\sdepth_SI/J=2$. Thus    the interval $ [a,x_1a]$
is   in  $\mathcal P$.

Now, let $c$ be as in $(2)$. We will show that  $b= c/x_t\in B$.  Indeed, if $ b\not\in B$ then $b\not\in (x_1,\ldots,x_r)$ because otherwise $b\in J$, which is false. Thus $c$ can enter only   in an  interval $ [a,c]$ for let us say $a=x_tx_i$.  But this interval is not  in $\mathcal P$ because $ a$ belongs to the  interval $[a,x_1a]$. Contradiction!  Thus $c$ does not appear in the intervals of $\mathcal P$.  Replacing $[a, x_1a]$ with $ [a,c]$ in $ \mathcal P$ we get another partition of $I'/J'$ with sdepth $3$,  where the interval  $[a,x_1a]$ is not present, contradicting $(1)$.

 Moreover suppose that $x_1b\not \in J$.  By $(1)$,  $c$ can appear only in  the interval $[b,c]$  because  we have already the intervals $[x_tx_i,x_1x_tx_i]$, $[x_tx_j,x_1x_tx_j]$ in  $\mathcal P$. Then we cannot have an interval $[b,x_1b]$ in  $\mathcal P$ and so $x_1b$ could appear in the interval, let us say $[x_1x_i,x_1b]$. Certainly, it is possible that $x_1b$ will not appear at all in an interval of $\mathcal P$, but we may modify $\mathcal P$ to get this. Replace in  $\mathcal P$ the intervals $[x_1x_i,x_1b]$, $[b,c]$, $[x_tx_i,x_1x_tx_i]$ by the intervals $[b,x_1b]$, $[x_tx_i,c]$, $[x_1x_i,x_1x_tx_i]$ and we get another partition of $I'/J'$ with sdepth $3$ but without  the interval $[x_tx_i,x_1x_tx_i]$, contradicting again (1).
\end{proof}

\begin{Lemma} \label{p-} Suppose that $I \subset S$ is generated by $x_1$ and a nonempty set $E$ of square free monomials of degrees $2$ in  $x_2,\ldots,x_n$ and $\sdepth_S I/J=2$.
Assume that $x_1a\not \in J$ for all $a\in E$ and
any square free monomial $u\in S$ of degree $2$, which is not in $I$, satisfies $x_1u\in J$.
Then $\depth_S I/J\leq 2$.
\end{Lemma}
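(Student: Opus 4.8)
The plan is to cut $I/J$ by the ideal $I'$ of Lemma \ref{p+2} and then argue on partitions of $P_{I\setminus J}$.

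Fix a generator $a_0=x_sx_t$ of $E$, $1<s<t\le n$. Since $x_1a_0\notin J$ we get $x_1x_s,x_1x_t\notin J$, hence $x_1x_t\in B$; set $I'=(B\setminus\{x_1x_t\})$ and $J'=J\cap I'$, so that we are exactly in the setting of Lemma \ref{p+2} with $r=1$, and consider
$$0\to I'/J'\to I/J\to I/(I'+J)\to 0 .$$
The first step is the isomorphism $I/(I'+J)\cong K[x_1,x_t]$, giving $\depth_S I/(I'+J)=2$: every monomial of $I$ divisible by some $a\in E$ already lies in $I'$, so $I/(I'+J)$ is cyclic on the class of $x_1$; for $j\ne 1,t$ the monomial $x_1x_j$ lies in $I'+J$ (it is in $J$, or it is a degree-$2$ generator of $I'$ different from $x_1x_t$), so the annihilator of that class contains $(x_j:j\ne 1,t)$; and it is no larger since the only square free divisor of degree $\ge 2$ of a monomial $x_1^{a+1}x_t^b$ is $x_1x_t$, which lies neither in $I'$ nor in $J$. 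The Depth Lemma applied to the sequence then shows: if $\depth_S I'/J'\le 2$ then $\depth_S I/J\le 2$, and we are done; so from now on assume $\depth_S I'/J'\ge 3$.

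Since $I'$ is generated in degree $\ge 2$, from $\depth_S I'/J'\ge 3$ one obtains a partition $\mathcal P$ of $I'/J'$ with $\sdepth\mathcal P=3$ — this is the point where the Stanley conjecture for $I'/J'$ is invoked, and it is available in the cases relevant to Theorem \ref{p1}. Lemma \ref{p+2} with $r=1$ now applies to $\mathcal P$: part $(1)$ with $a=a_0$ forces $[a_0,x_1a_0]\in\mathcal P$, and parts $(1)$ and $(2)$ more generally locate the degree-$2$ monomials divisible by $x_t$, and the pertinent degree-$3$ monomials, inside $\mathcal P$. The goal is then to rebuild $\mathcal P$ into a partition $\mathcal Q$ of $I/J$ with $\sdepth\mathcal Q\ge 3$, which contradicts $\sdepth_S I/J=2$ and therefore forces $\depth_S I/J\le 2$.

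For the surgery, note $P_{I\setminus J}=P_{I'\setminus J'}\sqcup\{x_1,x_1x_t\}$, so only $x_1$ and $x_1x_t$ still have to be placed. I would put $[x_1,x_1a_0]=\{x_1,x_1x_s,x_1x_t,x_1a_0\}$ into $\mathcal Q$, delete from $\mathcal P$ the interval $[a_0,x_1a_0]$ and the interval $[x_1x_s,w]$ (with $\deg w\ge 3$) having lower end $x_1x_s$, re-partition $[x_1x_s,w]\setminus\{x_1x_s\}$ into intervals with top of degree $\ge 3$ in the standard way, and re-assign $a_0$ to an interval $[a_0,c]$, where $c$ is a degree-$3$ square free multiple of $a_0$ not in $J$, made available by deleting its $\mathcal P$-interval (Lemma \ref{p+2}(2), together with the hypothesis that $x_1u\in J$ for every square free $u\notin I$ of degree $2$, shows that such a $c$ occupies at most the trivial interval $[c,c]$ in $\mathcal P$). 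The main obstacle is precisely the existence of $c$: it fails when $x_1a_0$ is the only degree-$3$ square free multiple of $a_0$ outside $J$, and this happens for every choice of $a_0\in E$, so that $a_0$ cannot be rerouted. These degenerate configurations must be treated directly: if some variable $x_j$ with $x_1x_j\notin J$ occurs in no generator of $E$, then all square free degree-$3$ multiples of $x_1x_j$ lie in $J$ and Lemma \ref{za} gives $\depth_S I/J\le 2$ at once; otherwise one combines the hypothesis on degree-$2$ monomials outside $I$ with Lemma \ref{za} (resp.\ Proposition \ref{p}) and a computation of $\Ann_{I/J}$ of a suitable monomial to conclude $\depth_S I/J\le 2$. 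Carrying out this last case analysis is the real content of the lemma.
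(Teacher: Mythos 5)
Your opening move coincides with the paper's: the exact sequence $0\to I'/J'\to I/J\to I/(I'+J)\to 0$ with $I'=(B\setminus\{x_1x_t\})$, the identification of the cokernel with $(x_1)/((x_1)\cap(I'+J))$ of depth $\geq 2$, and the Depth Lemma. But your dichotomy is set up on the wrong invariant. You assume $\depth_S I'/J'\geq 3$ and then ``obtain'' a partition of sdepth $3$ by invoking the Stanley conjecture for $I'/J'$ --- that is not available; it is essentially what the paper is in the business of proving, and nothing in the hypotheses puts $I'/J'$ into a previously settled case. The correct dichotomy is on $\sdepth_S I'/J'$: if it is $\leq 2$, then since $I'$ is generated in degree $\geq 2$ the already established implication $\sdepth\leq 2\Rightarrow\depth\leq 2$ of \cite[Theorem 4.3]{P} gives $\depth_S I'/J'\leq 2$ and the Depth Lemma finishes; otherwise $\sdepth_S I'/J'\geq 3$ and a partition $\mathcal P$ of sdepth $3$ exists by the definition of Stanley depth, with no conjecture needed. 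This reversal is not cosmetic: your version makes the whole argument circular.

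The second and larger gap is that the combinatorial core is missing. Your surgery --- inserting $[x_1,x_1a_0]$, deleting $[a_0,x_1a_0]$, and rerouting $a_0$ into some $[a_0,c]$ --- hinges on the existence of a degree-$3$ square free multiple $c\neq x_1a_0$ of $a_0$ outside $J$; you concede this can fail for every choice of $a_0$ and defer ``the real content of the lemma'' to an unspecified case analysis. The paper does not try to assemble an sdepth-$3$ partition of $I/J$ at all. Instead, after using Lemma \ref{p+2}(1) to force the intervals $[a_j,x_1a_j]$ for all $a_j\in (E\cap(x_t))\setminus\{x_1x_t\}$ and Lemma \ref{p+2}(2) together with the hypothesis on monomials outside $I$ to show that the relevant $c\in C$ and the tops $c'_i$ of the intervals $[a'_i,c'_i]$ lie outside $(x_1,a_1,\ldots,a_k)$, it introduces the smaller ideal $I''=(x_1x_t,a_1,\ldots,a_k)$ with $J''=J\cap I''$, exhibits an sdepth-$3$ partition of $I/(J+I'')$, deduces $\sdepth_S I''/J''\leq 2$ from \cite[Lemma 2.2]{R}, hence $\depth_S I''/J''\leq 2$ from \cite[Theorem 4.3]{P}, and applies the Depth Lemma once more. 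Without either that construction or your unperformed case analysis, the proof is incomplete precisely where the lemma is hard.
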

\begin{proof} Let $1<t\leq n$ be such that $x_1x_t\in B$. We may assume that $ a_1,\ldots,a_k$, are all monomials  of $ (E\cap (x_t))\setminus \{x_1x_t\}$. Set $ I_t=(B \setminus \{x_1x_t\}) $ and $ J_t=J\cap I_t$. In the  exact sequence
$$0\to I_t/J_t\to I/J\to I/J+I_t\to 0$$
the last term has depth $\geq 2$ because it is isomorphic with $ (x_1)/(x_1)\cap (J+I_t)$ and $x_1x_t\not\in  J+I_t$. If $\sdepth_S I_t/J_t\leq 2$ then we get  $\depth_S I_t/J_t \leq 2$ by \cite[Theorem 4.3]{P}. Applying the Depth Lemma we get  $\depth_S I/J\leq 2$.

 Thus we may assume that $\sdepth_S I_t/J_t\geq 3$ for all $1<t\leq n$ such that $x_1x_t\in B$. Let ${\mathcal P}={\mathcal P_t}$ be a partition of $I_t/J_t$ with sdepth $=3$. By the above lemma    the intervals $ [a_j,x_1a_j]$,  $1\leq j\leq k$ are in  $ \mathcal P$.

 Suppose that $c=x_ix_jx_t\in C$, $i,j,t>1$  and $x_{j}x_t,x_{i}x_t\in E$.  Then $a=x_ix_j\in E$ by the above lemma.  By our hypothesis we have $x_1a, x_1x_{j}x_t,x_1x_{i}x_t\in C$.   Thus  $c$ cannot appear in an interval of $\mathcal P$ using again the above lemma.

 For $b=x_1x_i\in B$,  $\mathcal P$ must contain  some  intervals   of the form $[x_1x_i,x_1a'_i]$ for some $a'_i\in E$. Certainly $ a'_i\not\in (x_t)$ because we saw that all $ a_j$, $ 1\leq j\leq k$ enter already  in the intervals $ [a_j,x_1a_j]$.
 Then these $ a'_i$ enter   in some intervals $[a'_i,c'_{i}]$ with $ c'_{i}\in (C\setminus (x_1))$.
   If  $c'_{i}\in (a_j)$ for some $ a_j$, $1\leq j\leq k$ then the third divisor of $c'_{i}$ of degrees $2$ is in $B$ too, and as above $c'_i$ cannot appear in an interval of ${\mathcal P}$. Contradiction! Thus $ c'_{i}\in (C\setminus (x_1,a_1,\ldots,a_k))$.

Let $ I'=(x_1x_t,a_1,\ldots,a_k)$, $J'=J\cap I'$. We have seen that $c'_{i}\not\in I'$.  In the exact sequence
$$0\to I'/J'\to I/J\to I/J+I'\to 0$$
we show that the last term has sdepth $\geq 3$. Let $a'_{i}=x_{i}x_{\nu_i}\in B$ for some $1<\nu_i\leq n$.  We may suppose that $t>2$, $x_1x_2\in B$ and we see that the intervals $[x_1,x_1a'_{2}]$,
$ [x_1x_i,x_1a'_i]$, $i>2$, $i\not =\nu_2$, $ [a'_i,c'_{i}]$   induce with the help of $\mathcal P$ a partition of  $I/J+ I'$  with sdepth $3$. Indeed, the only possible problem is that in $\mathcal P$ could appear some intervals of type $[a,ax_t]$ for some $a\in (E\setminus (x_t))$, $c=ax_t$ being the least common multiple of two $(a_j)$. But this is not possible as we saw above.
By  \cite[Lemma 2.2]{R} we get
$\sdepth_S I'/J'\leq 2$ and so $ \depth_S I'/J'\leq 2$ by \cite[Theorem 4.3]{P}.  Applying the Depth Lemma we get as  $\depth_S I/J\leq 2$.

\end{proof}

\begin{Proposition} \label{p0} Suppose that $I \subset S$ is generated by $x_1$ and a nonempty set $E$ of square free monomials of degrees $2$ in  $x_2,\ldots,x_n$ and $\sdepth_S I/J=2$.  Let  $E'=\{a\in E: x_1a\in C\}$ and $E''=E\setminus E'$.
Assume that any square free monomial $u\in S$ of degree $2$, which is not in $I$, satisfies $x_1u\in J$ and one of the following conditions hold:
\begin{enumerate}
\item{} $|E''|\leq |C\setminus (x_1,E')|$
\item{}  $|E''|> |C\setminus (x_1,E')|$ and $|B|\not =|C|+1$.
 \end{enumerate}
Then $\depth_S I/J\leq 2$.
\end{Proposition}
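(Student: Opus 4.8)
The strategy is to reduce Proposition \ref{p0} to Lemma \ref{p-} by disposing of the generators $a\in E'$, i.e. those with $x_1a\in C$. The idea is the following: if some $a\in E'$ has $x_1a\in J$, then $x_1a$ is not square free in the relevant sense or rather it lies in $J$, and we can try to quotient it out or split it off via a short exact sequence; while the monomials $a\in E'$ with $x_1a\notin J$ are exactly the obstruction that Lemma \ref{p-} was built to avoid, since that lemma required $x_1a\notin J$ for \emph{all} $a\in E$. So the first step is to separate the analysis according to whether $E'=\emptyset$. If $E'=\emptyset$, then every $a\in E=E''$ has $x_1a\in C$, and in particular $x_1a\notin J$, so the hypotheses of Lemma \ref{p-} are met verbatim and we are done immediately; this is where condition (1) with $|E''|\le|C\setminus(x_1,E')|$ or condition (2) will actually be needed, namely in the case $E'\ne\emptyset$.

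When $E'\ne\emptyset$, I would argue by induction on $|E'|$, peeling off one monomial $a_0\in E'$ at a time. Write $I=(x_1,E)$ and let $\bar I=(x_1,E\setminus\{a_0\})$. Using the short exact sequence
$$0\to \bar I/(\bar I\cap J)\to I/J\to I/(\bar I+J)\to 0,$$
the last term is isomorphic to $(a_0)/((a_0)\cap(\bar I+J))$, whose depth and sdepth are $\ge 2$ (in fact the generator has degree $2$, so depth $\ge 2$ by \cite[Lemma 1.1]{P}); by the Depth Lemma it then suffices to bound $\depth_S \bar I/(\bar I\cap J)\le 2$. The subtlety is that $\bar I$ may no longer be generated by $x_1$ together with degree-$2$ square free monomials only — unless $E\setminus\{a_0\}\ne\emptyset$, it is just $(x_1)$, and $\depth_S(x_1)/((x_1)\cap J)\le\ldots$ must be handled separately — and more importantly, the counting hypotheses (1)/(2), which are phrased in terms of the sets $B$, $C$, $E'$, $E''$ of the original $I$, must be shown to propagate to $\bar I$ (with its own $\bar B,\bar C,\bar E',\bar E''$), together with the crucial side condition $\sdepth_S\bar I/(\bar I\cap J)=2$. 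Establishing that $\sdepth$ drops correctly — it could a priori stay at $2$ or, if we are unlucky, become $3$ — is the first real obstacle; I expect to use Lemma \ref{r} or a Koszul-homology computation in the style of its proof to certify that $\depth_S\bar I/(\bar I\cap J)\le 2$ whenever the sdepth is $\le 2$, and to treat the borderline case $\sdepth=3$ by the same interval-surgery technique used in Lemma \ref{p+2}, contradicting $\sdepth_S I/J=2$.

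The heart of the matter, and what I expect to be the main obstacle, is the dichotomy between conditions (1) and (2) and why the exceptional case $|E''|>|C\setminus(x_1,E')|$ and $|B|=|C|+1$ is genuinely excluded. Here one should reinterpret $|B|$ and $|C|$: every element of $B$ is either $x_1x_t$ (i.e. $x_1$ times a variable, contributing exactly the set $\{x_1x_t: x_1x_t\in B\}$) or an element of $E$; likewise every $c\in C$ is either $x_1a$ for $a\in E$ (these are counted by $|E'|$) or lies in $C\setminus(x_1)$. So $|B|=|\{t: x_1x_t\in B\}|+|E|$ and $|C|=|E'|+|C\setminus(x_1)|$, and the hypothesis of Proposition \ref{p0} that every degree-$2$ square free $u\notin I$ has $x_1u\in J$ forces $\{t:x_1x_t\in B\}$ to coincide with the set of $t$ such that $x_t$ appears in some $a\in E$ — call its size $p$. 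Then $|B|=|C|+1$ becomes $p+|E''|+|E'| = |E'|+|C\setminus(x_1)|+1$, i.e. $|E''| = |C\setminus(x_1,E')|+1-p+(\text{terms from }E'\cap C\setminus(x_1))$; the point of condition (2) is that it forbids precisely the numerically unstable configuration where the $B$–$C$ count is off by exactly one, which is the configuration in which neither the direct Koszul argument nor the interval surgery can force the depth down (and indeed where Stanley's conjecture is delicate). I would carry out this bookkeeping carefully, show that under (1) or under (2) the reduced ideal $\bar I$ still satisfies (1) or (2), and thereby close the induction, landing in Lemma \ref{p-} at the base case $E'=\emptyset$.
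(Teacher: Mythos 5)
Your plan founders at the very first step on a reversal of the roles of $E'$ and $E''$. By definition $E'=\{a\in E: x_1a\in C\}$, so $E'=\emptyset$ means that \emph{no} $a\in E$ has $x_1a\in C$, i.e.\ every $a\in E$ satisfies $x_1a\in J$; this is the opposite of the hypothesis of Lemma \ref{p-}, which requires $x_1a\not\in J$ for all $a\in E$. The case in which Lemma \ref{p-} applies verbatim is $E''=\emptyset$ (equivalently $E=E'$), and accordingly the paper's induction runs on $|E''|$, not on $|E'|$; the case $E'=\emptyset$ is instead dispatched by Lemma \ref{za}, since then $C\cap(x_1)=\emptyset$ and each $x_1x_j\in B$ has all its degree-$3$ multiples in $J$. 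Your proposed peeling-off step is also quantitatively insufficient: in the sequence $0\to \bar I/(\bar I\cap J)\to I/J\to I/(\bar I+J)\to 0$ the cokernel $(a_0)/((a_0)\cap(\bar I+J))$ is generated in degree $2$ and is only guaranteed depth $\geq 2$, whereas the Depth Lemma transfers the bound $\depth_S \bar I/(\bar I\cap J)\leq 2$ to $\depth_S I/J\leq 2$ only when the cokernel has depth $\geq 3$ (compare Lemma \ref{e}, where the adjoined generators have degree $\geq d+2$ precisely for this reason). Finally, you acknowledge but do not supply the two genuinely hard ingredients: the verification that the numerical hypotheses and the condition $\sdepth=2$ propagate to the reduced ideal, and the role of the dichotomy (1)/(2).

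For comparison, the paper's argument does not peel off elements of $E'$ at all. It inducts on $|E''|$, first reducing (by an elaborate partition-surgery argument built on Lemma \ref{p+2}) to the situation $(C\setminus(x_1))\subset(E'')$, then choosing intervals $[a_j,c_j]$ for the elements of $E''$ so as to maximize the number $p$ of the $c_j$ lying outside $(E')$ and a secondary parameter $e$; the case $p=k$ lands in Lemma \ref{p-} via $I''=(x_1,E')$ and Rauf's lemma, the case $p<k$ with a stray $c\notin (E')$ feeds the induction on $|E''|$ via $I_e=(x_1,B\setminus\{a_1,\ldots,a_e\})$, and the residual case is exactly where conditions (1) and (2) enter, with $|B|<|C|+1$ handled by a chain of interval replacements leading to a partition of $I/J$ of sdepth $3$, contradicting $\sdepth_S I/J=2$. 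None of this is recoverable from the proposal as written.
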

\begin{proof}   If  $E''=\emptyset$ then  we apply the above lemma. Apply induction on $|E''|$. If $E'=\emptyset$ then  $C\cap (x_1)=\emptyset$ and the conclusion follows from Lemma \ref{za}. Let  $E''=\{a_1,\ldots,a_k\}$, $k> 0$. We claim that we may reduce our problem to the case when  $(C\setminus (x_1))\subset (E'')$. Indeed, otherwise let $c\in (C\setminus (x_1,E''))$. Then there exists $b\in E'$ such that $c\in (b)$. Choose $t$, $1<t\leq n$ such that $x_t|b$. Then $x_1x_t$ divides $x_1b\in C$ and so it is in $B$. Set $I'=(B\setminus \{x_1x_t\})$, $J'=J\cap I'$.
  In the following exact sequence
$$0\to I'/J'\to I/J\to I/(I'+J)\to 0$$                                                                                                                                                                                             the last term is isomorphic with $(x_1)/(x_1)\cap (I'+J)$ and has depth $\geq 2$ because $x_1x_t\not \in (I'+J)$.
  If  $\sdepth_S I'/J' \leq 2$ then by \cite[Theorem 4.3]{P}  we get $\depth_S I'/J'\leq 2$ and using the Depth Lemma it   follows $\depth_SI/J\leq 2$.

Thus we may suppose that $\sdepth_SI'/J'\geq 3$ and let ${\mathcal P}={\mathcal P}_t$ be a partition of $I'/J'$ with sdepth $3$. By Lemma $\ref{p+2}$ (see also the above lemma), $\mathcal P$ may contain some disjoint intervals    $ [x_1x_i,x_1b'_i]$, $[b'_i,c'_i]$,  for some $b'_i\in E'$, $c'_i\in C\setminus (x_1)$, $i\not =1,t$ with
$x_1x_i\in B$, $[b',x_1b']$ for $b'\in E'\setminus \{\{b'_i\}\}$ and $[a_j,c_j]$, $j\in [k]$, $c_j\in C$. As in the proof of the above lemma we have $b'_i\not\in (x_t)$. Thus  the above $b$ is not one of $b'_i$ and enters in $\mathcal P$ in the interval $[b,x_1b]$. Note that $c$ is not among   $\{ \{c_j\}\}$ because is not in $(E'')$. On the other hand, if $c=c'_i$ then should be divisible by $b$ and $b'_i$, both being from $E'$. Then by Lemma $\ref{p+2}$ applied for a  $t'$ given by the only one common variable $x_{t'}$ of  $b$, $b'_i$, the third divisor $u=c/x_{t'}$ of degree $2$ of $c$ is in $E$,  and   $x_1u\in J$ because $c$
can enter in an interval $[u,c]$ of a partition  ${\mathcal P}_{t'}$. Thus $u\in E''$ and so $c\in (E'')$, which is false.    Then we may replace the  interval $[b,x_1b]$ by $[b,c]$, which is again false because all intervals $[b',x_1b']$, $b'\in (E')\cap (x_t)$ should be present in $\mathcal P$ by Lemma $\ref{p+2}$. This proves our claim. Also note that $|C\setminus (x_1)| \geq
|B\cap (x_1)|-1+k$.

Then we may assume that
$(C\setminus (x_1))\subset  (E'')$.
We may suppose that $c_i\in (E')$ if and only if $p< i\leq k$ for some $0\leq p\leq k$. Moreover, we will arrange to have as many as possible  $c_j$ outside  $(E')$.  If $c'\in (C\setminus (x_1))$ is a multiple of let say $a_{p+1}$, but $c'\not\in (E')$. We may replace in the above intervals $c_{p+1}$ by $c'$, the effect being  the increasing of $p$. Thus after such procedure we may suppose that either $p=k$, or there exist no $c$ in $(C\setminus (x_1,c_{1},\ldots,c_p))\cap (a_{p+1},\ldots,a_k)$ which is not in $(E')$.

If $p=k$ then set $I''=(x_1,E')$, $J''=I''\cap J$ and see that in the
exact sequence
$$0\to I''/J''\to I/J\to I/(I''+J)\to 0$$                                                                                                                                                                                             the last term is isomorphic with $(E'')/(E'')\cap (I''+J)$ and has sdepth $ 3$ because  the intervals $[a_j,c_j]$, $j\in [k]$ gives a partition with sdepth $3$.
  Then $\sdepth_S I''/J'' \leq 2$  by \cite[Proposition 2.2]{R} and  we get $\depth_S I''/J''\leq 2$ by Lemma \ref{p-}. Using the Depth Lemma it   follows $\depth_SI/J\leq 2$.

 Next suppose that $ p<k$. Then $(C\setminus (x_1,c_{1},\ldots,c_p))\cap (a_{p+1},\ldots,a_k)\subset (E')$.
 We may  choose $c_1,\ldots,c_p$ from the beginning (it is possible to make such changes in $\mathcal P$) such that
  $e=|\{i:c_i\not \in (a_{p+1},\ldots, a_k)\}|$ is maxim possible and renumbering $a_j$, $j\leq p$ we may suppose that $c_i\not \in (a_{p+1},\ldots,a _k)$ if and only if $i\in [e]$ for some $0\leq e\leq p$.

    Suppose that there exists  $c\in C\setminus (x_1,c_1,\ldots,c_p)$ such that   $c\not\in E'$. Then $c$ is not in $(a_{p+1},\ldots,a_k)$ and necessary $c\in (a_1,\ldots,a_p)$. Assume that $c\in (a_i)$ for some $i\in [p]$. If $i>e$ then $c_i\in (a_{p+1},\ldots,a _k)$, let us say $c_i\in (a_j)$ for some $j>p$ and we may change $c_j$ by $c_i$ and  replace $c_i$ by $c$ increasing $p$ because $c_i\not\in E'$. This is not possible since $p$ was maxim given. Thus $i\leq e$ and so $e>0$. If $c_i\in (a_{e+1},\ldots,a_p)$, let us say $c_i\in (a_p)$ then we may replace $c_p$ by $c_i$ and $c_i$ by $c$ increasing $e$ which is also not possible. Thus $c_i\not \in (a_{e+1},\ldots,a_p)$.

    Then set $I_e=(x_1,B\setminus \{a_1,\ldots,a_e\})$, $J_e=I_e\cap J$. In the exact sequence
 $$0\to I_e/J_e\to I/J\to I/(I_e+J)\to 0$$
 the last term has sdepth $3$ because we may write there the intervals $[a_i,c_i], i\in [e]$ since $c_i\not \in I_e$. By \cite[Proposition 2.2]{R}
  it follows that $\sdepth_S I_e/J_e\leq 2$ and so  $\depth_S I_e/J_e\leq 2$ by induction hypothesis on $|E''|$. Using the Depth Lemma it follows $\depth_S I/J \leq 2$.

  Now suppose that there exist no such $c$, that is $C\setminus (x_1,E')=\{c_1,\ldots,c_p\}$. Thus $p=|C\setminus (x_1,E')|$ and so we end the case when the condition (1) holds.  Now suppose that the condition (2) holds, in particular   $k>p$ and $s=|B|\not=1+q$  for $q=|C|$. If  $s>1+q$ then we end with \cite{P1}. Suppose that $s<1+q$. Then there exists a $c\in C$ which does not appear in an interval $[b,c]$ for some $b\in (B\setminus \{x_1x_t\})$. Note that $c$
 cannot be a $c_j$ for $j\in [p]$ and so $c\in (E')$, let us say $c\in (a)$ for some  $a\in E'$. Let $j$ be such that $x_j|a$. We have $x_1x_j\in B$ and there exists as above a partition ${\mathcal P}_j$ with sdepth $3$. Let $I_a=(B\setminus \{a\})$, $J_a=I_a\cap J$. We see that ${\mathcal P}_j$ induces a partition
 ${\mathcal P}_a$ of $I_a/J_a$ with sdepth $3$ replacing the interval $[a,x_1a]$ from ${\mathcal P}_j$ with $[x_1x_j,x_1a]$.

  In ${\mathcal P}_a$ there is an interval $[x_1x_t,x_1a''_1]$ for some $a''_1=x_tx_i\in E'$. We have $a''_1\not=a'$ because otherwise we may change in ${\mathcal P}_t$
  the interval $[a''_1,x_1a''_1]$ by $[a''_1,c]$, which is false. Then there is in ${\mathcal P}_a$ an interval $[a''_1,c''_1]$. If $c''_1$ is not a $c_b$ as above then we may replace in  ${\mathcal P}_t$ the interval  $[a''_1,x_1a''_1]$ by $[a''_1,c''_1]$, which is again false. Thus $c''_1=c_{b_1}$ for some $b_1\in (B\setminus \{x_1x_t\})$.  If $b_1=a$ we may replace in  ${\mathcal P}_t$ the intervals $[a''_1,x_1a''_1]$, $[b_1,c''_1]$ by $[a''_1,c''_1]$, $[b_1,c]$, which is false.  Then there is in ${\mathcal P}_a$ an interval $[b_1,c''_2]$. By recurrence we find in ${\mathcal P}_a$ the intervals $[x_1x_t,x_1a''_1]$, $[a''_1,c''_1]$, $[a''_2,c''_2],\ldots $ which define a partition ${\mathcal P}_a$, where $c$ is not present in an interval $[b,c]$, $b\in (B\setminus\{a\})$.  Adding the interval $[a,c]$ to ${\mathcal P}_a$ we get a partition ${\mathcal P}'$  with sdepth $3$ of $I_B/J_B$, where $I_B=(B)$, $J_B=I_B\cap J$. But then we replace in  ${\mathcal P}'$ the intervals $[x_1x_t,x_1a''_1]$, $[x_1x_i,x_1a''_1]$ by $[x_1,x_1a''_1]$ and we get a partition of $I/J$ with sdepth $3$. Contradiction!
  \end{proof}

\begin{Theorem} \label{p1} Suppose that $I \subset S$ is generated by $x_1$ and a nonempty set $E$ of square free monomials of degrees $2$ in  $x_2,\ldots,x_n$ and $\sdepth_S I/J=2$.  Let  $E'=\{a\in E: x_1a\in C\}$ and $E''=E\setminus E'$. Assume that one of the following conditions holds:
\begin{enumerate}
\item{} $|E''|\leq |C\setminus (x_1,E')|$
\item{}  $|E''|> |C\setminus (x_1,E')|$ and $|B|\not =|C|+1$.
\end{enumerate}
Then $\depth_S I/J \leq 2$.
\end{Theorem}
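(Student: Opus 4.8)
The statement is identical to Proposition \ref{p0} except that the latter assumes in addition that every square free monomial $u$ of degree $2$ with $u\notin I$ satisfies $x_1u\in J$; the plan is to reduce to that situation. If this extra hypothesis holds, I would simply quote Proposition \ref{p0}. Otherwise there is a square free monomial $u=x_ix_j$ with $1<i<j\leq n$, $u\notin I$ and $c:=x_1u=x_1x_ix_j\notin J$, i.e. $c\in C$ is an element of $C\cap(x_1)$ not of the form $x_1a$ with $a\in E'$; since $J$ is a monomial ideal we also have $x_1x_i,\,x_1x_j\in B$, whereas $x_ix_j\notin I$.

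I would then pick the variable $x_j$ (possibly after interchanging $x_i$ and $x_j$), put $I'=(B\setminus\{x_1x_j\})$ and $J'=J\cap I'$, and consider
$$0\to I'/J'\to I/J\to I/(I'+J)\to 0 .$$
As in the proof of Proposition \ref{p0}, the last term is isomorphic to $(x_1)/\bigl((x_1)\cap(I'+J)\bigr)$ and has depth $\geq 2$ because $x_1x_j\notin I'+J$; so by the Depth Lemma it suffices to have $\depth_S I'/J'\leq 2$, and this follows from $\sdepth_S I'/J'\leq 2$ by \cite[Theorem 4.3]{P} (recall that $I'$ is generated in degree $2$). Thus the whole problem is the case $\sdepth_S I'/J'\geq 3$, where I would argue by contradiction: choose a partition $\mathcal P$ of $I'/J'$ of sdepth $3$ and enlarge it to a partition of $I/J$ of sdepth $3$, which contradicts $\sdepth_S I/J=2$. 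This is plausible because the poset of square free monomials of $I'\setminus J'$ is precisely the poset of $I\setminus J$ with the two monomials $x_1$ and $x_1x_j$ deleted: every square free monomial of $I\setminus J$ of degree $\geq 3$ is still divisible by some element of $B\setminus\{x_1x_j\}$, while the only square free monomials of $I\setminus J$ lying outside $I'$ are $x_1$ and $x_1x_j$.

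To carry this out I would use the bookkeeping of Lemma \ref{p+2} and of the proofs of Lemma \ref{p-} and Proposition \ref{p0} to locate $x_1x_i$, $x_1x_j$, the degree-$3$ multiples $x_1x_jx_m\in C$ and $c$ inside $\mathcal P$ (for instance $x_1x_i$ lies in an interval $[x_1x_i,x_1x_ix_m]$, and $c\in I'\setminus J'$ while $x_ix_j\notin I'$), and then rearrange $\mathcal P$ until some $x_1x_jx_m\in C$ becomes free, so that the interval $[x_1,\,x_1x_jx_m]$ can be adjoined; this one interval captures both $x_1$ and $x_1x_j$ without lowering the sdepth. The hypotheses $(1)$ and $(2)$ enter exactly here: they supply the counting (the sizes of $E''$ and of $C\setminus(x_1,E')$, and $|B|\neq|C|+1$) needed to guarantee enough room in $\mathcal P$ for the rearrangement, just as in Proposition \ref{p0}, and I expect the argument to split into the two cases $(1)$ and $(2)$, with \cite[Lemma 2.2]{R}, \cite[Proposition 2.2]{R}, \cite[Theorem 4.3]{P} and the Depth Lemma used along the way as before. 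The genuine obstacle is precisely this partition surgery --- proving that, under $(1)$ or $(2)$, every sdepth-$3$ partition of $I'/J'$ can be reorganised into one of $I/J$; the exact sequences and the depth bookkeeping are routine.
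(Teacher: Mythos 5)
Your reduction is the right one in outline --- the paper also splits into the case where every square free degree-$2$ monomial $u\notin I$ has $x_1u\in J$ (where Proposition \ref{p0} applies verbatim) and the complementary case, which it handles by a short exact sequence of the form $0\to I'/J'\to I/J\to I/(I'+J)\to 0$. But in that complementary case your argument has a genuine gap, and it is caused by the choice of $I'$. Taking $I'=(B\setminus\{x_1x_j\})$, the monomial $c=x_1x_ix_j$ still lies in $I'$, since it is a multiple of $x_1x_i\in B\setminus\{x_1x_j\}$; consequently the cokernel $(x_1)/\bigl((x_1)\cap(I'+J)\bigr)$ contains $x_1$ and $x_1x_j$ but \emph{no} square free degree-$3$ multiple of $x_1x_j$ (any such $x_1x_jx_m$ would need $x_1x_m\notin B$, hence $x_1x_m\in J$, hence $x_1x_jx_m\in J$). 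So the cokernel has sdepth exactly $2$, \cite[Lemma 2.2]{R} gives no bound on $\sdepth_S I'/J'$, and you are forced into the ``partition surgery'' that you explicitly leave undone. Since that surgery is the entire content of this case (it is exactly the kind of combinatorial chase that occupies Lemma \ref{p-} and Proposition \ref{p0}), the proof is incomplete where it matters.

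The paper sidesteps the surgery entirely by a sharper choice: renumber so that $c=x_1x_{n-1}x_n\notin J$ with $x_{n-1}x_n\notin I$, and remove \emph{both} $x_1x_{n-1}$ and $x_1x_n$, i.e.\ set $I'=(B\setminus\{x_1x_{n-1},x_1x_n\})$. Then no degree-$2$ divisor of $c$ lies in $I'$ (here $x_{n-1}x_n\notin I$ is essential), so $c\notin I'+J$, and the single interval $[x_1,c]$ covers every monomial of degree $\le 2$ in the poset of $I/(I'+J)$; hence that cokernel has sdepth $\ge 3$ (and depth $\ge 3$). Now \cite[Lemma 2.2]{R} applied to $\sdepth_S I/J=2$ immediately forces $\sdepth_S I'/J'\le 2$, then \cite[Theorem 4.3]{P} gives $\depth_S I'/J'\le 2$ and the Depth Lemma finishes. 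Note also that conditions (1) and (2) play no role in this case --- they are consumed entirely inside Proposition \ref{p0} --- whereas you expected them to feed the rearrangement here; and the degenerate case in which no $c=x_1x_tx_k$ avoids $J$ at all must be routed through Proposition \ref{p} rather than through either branch of your dichotomy.
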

\begin{proof}  We may assume $n>2$ and  there exists   $c=x_1x_{n-1}x_n\not \in J$ after renumbering the variables $x$,  otherwise we  apply Proposition $\ref{p}$.
Then $z=x_{n-1}x_n\not \in J$.

 First suppose that we may find $c$ with $z\not \in I$.  Set $I'=(B\setminus \{x_1x_{n-1},x_1x_n\})$ and $J'=I'\cap J$. Then necessary $B\supsetneq \{x_1x_{n-1},x_1x_n\}$ and so $I'\not =J'$ because otherwise $\sdepth_S I/J=3$. Note that no $b$ dividing $c$ belongs to $I'$ and so $c\not \in (J+I')$. In the following exact sequence                                                                                                                                                                                                       $$0\to I'/J'\to I/J\to I/(I'+J)\to 0$$                                                                                                                                                                                             the last term  has sdepth $\geq 3$ since  $[x_1,c]$ is the whole poset  of $(x_1)/(x_1)\cap (I'+J)$  except some  monomials of degrees $\geq 3$. It has also depth $\geq 3$ because $x_{n-1}x_n\not \in ((J+I'):x_1)$.
  The first term has sdepth $\leq \sdepth_S I/J=2$ by \cite[Lemma 2.2]{R} and so it has depth $\leq 2$ by \cite[Theorem 4.3]{P}. It   follows $\depth_SI/J\leq 2$.

  Next suppose that there exist no such $c$, that is any square free monomial  $u\in S$ of degree $2$, which is not in $I$ satisfies $x_1u\in J$. We may assume that $C\subset (x_1,B)$ by Lemma $\ref{e}$. Now it  is enough to apply Proposition $\ref{p0}$.
\end{proof}

\begin{Example} {\em Let $n=3$, $r=1$, $I=(x_1,x_2x_3)$, $J=0$. We have $c=x_1x_2x_3\not \in J$ and $x_2x_3\in I$. Note also that $\sdepth_S I=\depth_S I=2.$}
\end{Example}

\end{document}